\newtheorem{theorem}{Theorem}[section]
\newtheorem{lemma}[theorem]{Lemma}
\theoremstyle{definition}
\newtheorem{example}[theorem]{Example}
\newtheorem{proposition}[theorem]{Proposition}
\newtheorem{remark}[theorem]{Remark}
\newtheorem{notation}[theorem]{Notation convention}
\theoremstyle{remark}
\newcommand{\be}{\begin{equation}}
\newcommand{\ee}{\end{equation}}
\numberwithin{equation}{section}
\begin{document}
\title[Combinatorial identities and Chern numbers]{Combinatorial identities and Chern numbers of complex flag manifolds}
\author{Ping Li}
\address{School of Mathematical Sciences, Tongji University, Shanghai 200092, China}
\email{pingli@tongji.edu.cn\\
pinglimath@gmail.com}
\author{Wenjing Zhao}
\thanks{Both authors were partially supported by the National
Natural Science Foundation of China (Grant No. 11471247) and the
Fundamental Research Funds for the Central Universities.}

 \subjclass[2010]{05A19, 14M15, 05E05, 32M10}


\keywords{combinatorial identity, Chern number, complex flag manifold, Bott's residue formula, circle action.}

\begin{abstract}
We present in this article a family of new combinatorial identities via purely differential/complex geometry methods, which include as a speical case a unified and explicit formula for Chern numbers of all complex flag manifolds. Our strategy is to construct concrete circle actions with isolated fixed points on these manifolds and explicitly determine their weights. Then applying Bott's residue formula to these models yields the desired results.
\end{abstract}

\maketitle
\section{Introduction}\label{section1}
Complex flag manifolds are natural generalizations of complex projective spaces and complex Grassmannian manifolds, and forms an important subclass of compact complex manifolds, which can be described as follows. Arbitrarily fix a positive integer $r$ and $r+1$ positive integers $m_1,m_2,\ldots,m_r,m_{r+1}$ and set $N:=\sum_{j=1}^{r+1}m_j.$
Define the following set
\be\label{definitionF}
\begin{split} &F:=F(m_1,\ldots,m_r,m_{r+1}):=\\
&\bigg\{(L_1,\cdots,L_r)~\bigg|~L_1\subset
L_2\subset\cdots\subset L_r, \text{$L_i$ are linear subspaces in $\mathbb{C}^N$}, \text{dim}_{\mathbb{C}}L_i=\sum_{j=1}^im_j\bigg\}
\end{split}\ee
and call such an $(L_1,\cdots,L_r)\in F$ a \emph{flag} in $\mathbb{C}^N$. When $r=1$ or $r=1$ and $m_1=1$, it degenerates to the complex Grassmannian consisting of complex $m_1$-dimensional linear subspaces in $\mathbb{C}^{m_1+m_2}$ or complex $m_2$-dimensional projective spaces, which play fundamental roles in geometry and topology.

Note that the unitary group $\text{U}(N)$ acts transitively in a natural manner on $F$ and its isotropy subgroup is $\text{U}(m_1)\times\cdots\times\text{U}(m_{r+1})$. So the complex flag manifold $F$ can be endowed with a homogeneous space structure
\be\label{G/U}\frac{\text{U}(N)}{\text{U}(m_1)
\times\cdots\times\text{U}(m_{r+1})}.\ee

It is a classical fact, essentially due to Borel, Koszul, Wang and Matsushima that this homogenous space admits a canonical $\text{U}(N)$-invariant complex structure and can be endowed on this canonical complex structure with a unique invariant K\"{a}hler-Einstein metric with positive scalar curvature up to rescaling (\cite{Bor}, \cite{Ko}, \cite{Wa}, \cite{Mat}). Borel and Hirzebruch (\cite{BH1}, \cite{BH2}) systematically investigated the characteristic classes of homogeneous space $G/U$ ($G$ is a compact connected Lie group and $U$ is its closed subgroup) in terms of Lie theoretical information of $G$ and $U$ (root systems, weights, representations and so on). In particular, they gave a complete characterization in terms of root systems of when an invariant almost-complex structure on the complex flag manifolds $F$ is integrable and of the number of inequivalent invariant structures on $F$ (\cite[\S 12-\S 14]{BH1}).

As is well-known Chern numbers are basic numerical invariants of compact (almost) complex manifolds, which are complete invariants for complex cobordism (\cite{MS}). So a natural question is to calculate/determine Chern numbers of the complex flag manifold $F=F(m_1,\ldots,m_r,m_{r+1})$ endowed with possibly various almost-complex structures. A direct way to approach this question is to apply the above-mentioned Borel-Hirzebruch theory in \cite{BH1}, where the Chern classes of $F$ are described as polynomials in the roots of unitary groups. Indeed this idea has been taken up by Kotschick and Terzi\'{c} in \cite{KT} where they calculated the Chern numbers of the two invariant complex structures of complex flag manifold $F(n,1,1)$ ($r=2$ and $m_2=m_3=1$) and gave some related applications to the geometry of $F(n,1,1)$.

Besides this direct method, there is another indirect way to attack this problem, which is what we shall adopt. Note that usually it is difficult to calculate Chern numbers directly from their definition. A remarkable result of R. Bott (\cite{Bo}), which is now called Bott's residue formula,
tells us that if this compact complex manifold has a holomorphic vector field whose zero point set is isolated and admits a non-degenerate condition, we can reduce the calculation of its Chern numbers to the
consideration of local information around the zero point set of this vector field. This non-degenerate condition is automatically satisfied when the vector fields are generated by circle actions (see Section \ref{Bottresiduesection} for more details). When the zero point set is not isolated, a similar result was
established by Atiyah and Singer in \cite[$\S$8]{AS}, which is a
beautiful application of their general Lefschetz fixed point formula
and is now commonly called the Atiyah-Bott-Singer residue formula.

\emph{The main purpose} of this paper is to apply Bott's residue formula to \emph{all} complex flag manifolds $F(m_1,\ldots,m_{r+1})$ to obtain a family of new combinatorial identities, Theorem \ref{mainresult}, which \emph{strictly} include a unified and explicit formula for calculating Chern numbers of complex flag manifolds as a special case.
 The reason why our main results Theorem \ref{mainresult} are stronger than just a formula for Chern numbers is due to the statement of Bott's residue formula itself. If we take a closer look at the precise statement of Bott's residue formula, we shall see that
it provides \emph{more vanishing-type
information} for low-degree polynomials
than just a method of calculating Chern numbers
(more details can be found in
Section \ref{Bottresiduesection}). Thus accordingly our main results contain more information.

Note that our inputs (complex flag manifolds and Bott's residue formula) are purely geometric. So it is a little surprising to see that the outputs (Theorem \ref{mainresult}) are essentially combinatorial identities. However, it has been widely known that various aspects of geometry and topology of complex flag manifolds are deeply related to enumerative combinatorics (\cite{Ma}, \cite{Fu}). So our main results in this paper strengthen this point of view and thus in this sense they are natural and expectable.

\subsection*{Outline of this paper}
The rest of this paper is organized as follows. we shall state our main results and present two examples in Section \ref{mainresultsection}. In Section \ref{Bottresiduesection} we briefly review Bott's residue formula in the case of holomorphic circle action with isolated fixed points. Section \ref{preliminarysection} is devoted to preliminaries on complex flag manifolds and explicit construction of their local coordinate charts.  After these preliminaries, we shall construct in Section \ref{constructionsection} concrete circle actions on complex flag manifolds with isolated fixed points and explicitly determine the weights around these fixed points with the help of the local coordinate charts established in Section \ref{preliminarysection}, from which the proof of our main result easily follows. The last section is an Appendix which contains a detailed calculation on a Chern number discussed in Example \ref{exmple2} in Section \ref{section1}.

\section*{Acknowledgements}
This paper was completed during the first author's visit to Max-Planck Institute for Mathematics at Bonn in Fall 2016, to whom the first author would like to express his sincere thanks for its hospitality and financial support.

\section{Main results}\label{mainresultsection}
We introduce in this section some necessary notation and symbols and then state our main results of this paper.

We arbitrarily fix as in Section \ref{section1} a positive integer $r$ and $r+1$ positive integers $m_1,\ldots,m_{r+1}$ and define $N:=\sum_{j=1}^{r+1}m_j$ and $F:=F(m_1,\ldots,m_{r+1})$ as in (\ref{definitionF}). An ordered sequence $I:=(I_1,\ldots,I_{r+1})$ is called a \emph{decomposition}
of the set $\{1,2,\ldots,N\}$ if
 \be\label{decomposition}I_i\subset\{1,2,\ldots,N\}, \qquad\bigcup_{i=1}^{r+1}
 I_i=\{1,2,\ldots,N\},\qquad\sharp(I_i)=m_i.\ee
Here $\sharp(\cdot)$ denotes the cardinality of a set.
This means that these $I_i$ are mutually disjoint. Note that
there are
\be\label{eulernumber}{N\choose m_1}{N-m_1\choose m_2}\cdots{m_r+m_{r+1}\choose m_r}=\frac{N!}{m_1!m_2!\cdots m_r!m_{r+1}!}\ee
different decompositions for this set $\{1,2,\ldots,N\}$, which is precisely the Euler characteristic of the complex flag manifold $F(m_1,\ldots,m_{r+1})$ (see Remark \ref{remarkaftertheorem}).

Let $x_1,\ldots,x_N$ be $N$ variables. For each decomposition $I=(I_1,\ldots,I_{r+1})$, we formulate a set
$W_I$ as follows.
$$W_I:=\bigcup_{1\leq i<j\leq r+1}\{-x_{\alpha}+x_{\beta}~|~\alpha\in I_i,\beta\in I_j\}.$$
Note that $$\sharp(W_I)=\sum_{1\leq i<j\leq r+1}m_im_j=:d,$$ which is exactly the complex dimension of the complex flag manifold $F$ \big(this fact can also be seen from (\ref{G/U}) as $\text{dim}_{\mathbb{R}}\text{U}(n)=n^2$\big).

\begin{example}\label{examplepermutation}
Suppose that $m_1=m_2=\cdots=m_{r+1}=1$ and then $N=r+1$. In this case the decompositions correspond to $S_N$, the permutation group on $N$ objects,
$$W_{\sigma}=\{-x_{\sigma(i)}+x_{\sigma(j)}~|~1\leq i<j\leq N\},\qquad\forall~\sigma\in S_N,$$
and $\sharp(W_{\sigma})=N!.$
\end{example}

Recall that a \emph{partition} $\lambda$ is a finite sequence of positive integers
$(\lambda_1,\lambda_2,\ldots,\lambda_l)$
 in non-increasing order:
$\lambda_1\geq\lambda_2\geq\cdots\geq\lambda_l\geq 1.$
The \emph{weight} of $\lambda$ is defined to be
$\sum_{i=1}^{l}\lambda_i.$

We denote by $c_i(\cdot,\ldots,\cdot)$ the $i$-th elementary symmetric polynomial of $d$ variables. If $\lambda=(\lambda_1,\lambda_2,\ldots,\lambda_l)$
is a partition, we define $$c_{\lambda}(\cdot,\ldots,\cdot):=\prod_{i=1}^{l}c_{\lambda_i}(\cdot,\ldots,\cdot)$$ to be the product of these elementary symmetric polynomials $c_{\lambda_i}$. It will be clear soon that the Chern numbers of $F$ corresponding to the partition $\lambda$ shall be described in terms of the quantity $c_{\lambda}$.

\begin{notation}
If $\{y_1,\ldots,y_n\}$ is a set of $n$ variables, we define
$$e(\{y_1,\ldots,y_n\}):=\prod_{i=1}^{n}y_i$$
 to be the product of the elements in the set.
If $f(\cdot,\ldots,\cdot)$ is a symmetric polynomial of $n$ variables, we define
$$f(\{y_1,\ldots,y_n\}):=f(y_1,\ldots,y_n).$$
\end{notation}

With the above-defined symbols and notation understood, we can now state our main results as follows.

\begin{theorem}[Main results]\label{mainresult}
Suppose that $f(\cdot,\ldots,\cdot)$ is a homogeneous symmetric polynomial of $d$ variables. We denote by $\text{deg}(f)$ the degree of $f$. Then we formulate a rational function of the variables $x_1,\ldots,x_N$ as follows.
\be R_f(x_1,\ldots,x_N):=\sum_{I=(I_1,\ldots,I_{r+1})}
\frac{f(W_I)}{e(W_I)},\label{expression}\ee
where the sum of the right hand side is over all the decompostions $I$ of the set $\{1,\ldots,N\}$. Then we have
\begin{enumerate}
\item
if $\text{deg}(f)<d$, then $R_f(x_1,\ldots,x_N)\equiv 0.$

\item
If $\text{deg}(f)=d$, then $R_f(x_1,\ldots,x_N)$ is a constant depending only on $f$. Moreover, if $\lambda$ is a partition of weight $d$, then $R_{c_{\lambda}}(x_1,\ldots,x_N)$ is the Chern number of the complex flag manifold $F(m_1,\ldots,m_r,m_{r+1})$ corresponding to the partition $\lambda$.
\end{enumerate}
\end{theorem}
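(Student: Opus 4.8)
The plan is to realize the rational function $R_f$ as the output of Bott's residue formula applied to a carefully chosen holomorphic circle action on $F=F(m_1,\ldots,m_{r+1})$, so that the two assertions become exactly the low-degree vanishing and the top-degree Chern-number evaluation of that single formula. Concretely, I would fix $N$ integers $a_1,\ldots,a_N$ and let $S^1$ act on $\mathbb{C}^N$ through the diagonal homomorphism $t\mapsto\text{diag}(t^{a_1},\ldots,t^{a_N})\in\text{U}(N)$, which descends to a holomorphic $S^1$-action on $F$ via the homogeneous description (\ref{G/U}). The first step is to verify that for a generic choice of the $a_i$ (it already suffices that they be pairwise distinct, so that every difference $a_\beta-a_\alpha$ is nonzero) the action has only isolated fixed points: an $S^1$-fixed flag must be spanned by coordinate lines, hence is a nested choice of coordinate subspaces, which is precisely the datum of a decomposition $I=(I_1,\ldots,I_{r+1})$ of $\{1,\ldots,N\}$ with $\sharp(I_i)=m_i$. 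The fact that the number of such fixed points (\ref{eulernumber}) equals the Euler characteristic of $F$ is a reassuring consistency check, as it must hold for any action with isolated fixed points.

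The crux is the weight computation at each fixed point $P_I$. Using the explicit affine coordinate charts on $F$ to be constructed in Section \ref{preliminarysection}, I would show that the holomorphic tangent space $T_{P_I}F$ splits into $S^1$-eigenlines indexed by the pairs $(\alpha,\beta)$ with $\alpha\in I_i$, $\beta\in I_j$ and $i<j$, the eigenvalue on the $(\alpha,\beta)$-line being $a_\beta-a_\alpha$. In other words the multiset of tangent weights at $P_I$ is exactly $W_I$ with each variable $x_k$ specialized to $a_k$; write $W_I(a)$ for this specialization. This identifies $e(W_I(a))$ with the equivariant Euler class of $T_{P_I}F$ and $f(W_I(a))$ with the restriction to $P_I$ of the characteristic form attached to $f$. \emph{This is the step I expect to be the main obstacle}: one must build the local charts, recognize the chart coordinates as eigenvectors, and read off the indices and signs so that the weights emerge as the differences $a_\beta-a_\alpha$ over the prescribed ordered pairs. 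Once the weights are in hand, everything afterward is formal.

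With the weights established I would invoke Bott's residue formula in the isolated-fixed-point form reviewed in Section \ref{Bottresiduesection}. For a homogeneous symmetric polynomial $f$ of $d$ variables it reads
\be\sum_{I}\frac{f\big(W_I(a)\big)}{e\big(W_I(a)\big)}=\begin{cases}0, & \deg(f)<d,\\ \int_F\widehat{f}, & \deg(f)=d,\end{cases}\ee
where $\widehat{f}$ is the degree-$2d$ characteristic class obtained by substituting the Chern roots of $TF$ into $f$ and the integral denotes evaluation on the fundamental class; the vanishing for $\deg(f)<d$ is simply the statement that the equivariant pushforward lands in negative cohomological degree. Since this holds for every admissible integer vector $a=(a_1,\ldots,a_N)$, and such vectors are Zariski dense in $\mathbb{C}^N$, the rational-function identity $R_f\equiv 0$ of part (1) follows immediately, while in part (2) the right-hand side is a topological invariant independent of $a$, forcing $R_f$ to be a constant depending only on $f$. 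Finally, when $f=c_\lambda=\prod_i c_{\lambda_i}$ is a product of elementary symmetric polynomials, the localization of the Chern classes gives $\widehat{c_\lambda}=\prod_i c_{\lambda_i}(TF)$, so $R_{c_\lambda}$ equals the Chern number of $F$ attached to the partition $\lambda$, which completes the proof.
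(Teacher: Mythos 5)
Your proposal is correct and follows essentially the same route as the paper: the diagonal circle action $t\mapsto\mathrm{diag}(t^{a_1},\ldots,t^{a_N})$ with distinct exponents, fixed points identified with coordinate flags indexed by decompositions $I$, tangent weights $a_\beta-a_\alpha$ for $\alpha\in I_i$, $\beta\in I_j$, $i<j$ read off from the affine charts, and Bott's residue formula combined with the arbitrariness of the $a_i$ to promote the numerical identities to identities of rational functions. The step you flag as the main obstacle (the chart-level weight computation) is indeed where the paper spends its effort, via Lemma \ref{keylemma}.
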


\begin{remark}\label{remarkaftertheorem}~
\begin{enumerate}
\item
If $\text{deg}(f)<d$, it is quite difficult, at least at the first glance, to imagine that the right hand side of (\ref{expression}) vanishes. So this situation provides us a family of nontrivial combinatorial identities.

\item
If $\text{deg}(f)=d$, Theorem \ref{mainresult} tells us that $R_f(x_1,\ldots,x_N)$ is a constant depending only on $f$. This means that, for any $N$ mutually distinct numbers (integral, real or complex etc) $(\alpha_1,\ldots,\alpha_N)$, $R_f(\alpha_1,\ldots,\alpha_N)$ is equal to this constant. This provides us an effective method to calculate Chern numbers of the complex flag manifolds in practice.

\item
Put $f=c_d$ in (\ref{expression}), then $c_d(W_I)=e(W_I)$ and thus each summand in the right hand side of (\ref{expression}) is $1$ and so
$$R_{c_d}(x_1,\ldots,x_N)=\frac{N!}{m_1!m_2!\cdots m_r!m_{r+1}!},$$
 which is equal to the Chern number of the complex flag manifold $F(m_1,\ldots,m_{r+1})$ corresponding to the partition $(d)$. This Chern number is famously known to be equal to the Euler characteristic of $F(m_1,\ldots,m_{r+1})$ (compare to (\ref{eulernumber})).
\end{enumerate}
\end{remark}

If $\text{deg}(f)>d$, the right hand side of (\ref{expression}) is still well-defined. But in general the expressions $R_f(x_1,\ldots,x_N)$ depend on $x_1,\ldots,x_N$ and thus lack geometrical meanings. Nevertheless, in our situation, for two special homogenous symmetric polynomials of degree $d+1$: $c_1^{d+1}$ and $c_dc_1$, we still have the following
\begin{proposition}\label{mainprop}
\begin{eqnarray}\label{d}
\left\{ \begin{array}{ll}
R_{c_1^{d+1}}(x_1,\ldots,x_N)=0\\
R_{c_dc_1}(x_1,\ldots,x_N)=0
\end{array} \right.
\end{eqnarray}
\end{proposition}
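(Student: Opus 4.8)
The plan is to show that both quantities in (\ref{d}) are not merely rational but in fact \emph{polynomial} in $x_1,\ldots,x_N$, and then to pin them down by homogeneity, symmetry, and translation invariance. Observe first that every $R_f$ in (\ref{expression}) enjoys three structural features that are immediate from the definition: it is \emph{symmetric} in $x_1,\ldots,x_N$ (a permutation of the variables merely permutes the decompositions $I$); it is invariant under the simultaneous translation $x_i\mapsto x_i+t$, since each element of $W_I$ is a difference $-x_\alpha+x_\beta$; and it is \emph{homogeneous of degree} $\text{deg}(f)-d$, because $f(W_I)$ has degree $\text{deg}(f)$ and $e(W_I)$ has degree $d$ in the $x$'s. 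For the two polynomials at hand $\text{deg}(f)=d+1$, so $R_{c_1^{d+1}}$ and $R_{c_dc_1}$ are homogeneous of degree $1$. The key remark is that a symmetric, translation-invariant, homogeneous degree-one \emph{polynomial} must vanish: it is forced to be a multiple $c\cdot(x_1+\cdots+x_N)$, and translation invariance gives $cN=0$, hence $c=0$.

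For $R_{c_dc_1}$ this is almost immediate. Since $c_d$ is the product of all $d$ variables, $c_d(W_I)=e(W_I)$, so each summand collapses and
\be
R_{c_dc_1}(x_1,\ldots,x_N)=\sum_{I}c_1(W_I)=\sum_{I}\sum_{w\in W_I}w,
\ee
which is visibly a polynomial; the three structural features above then finish the proof. The genuine work is therefore to prove that $R_{c_1^{d+1}}$ is a polynomial, i.e.\ that the apparent poles cancel. The only possible poles lie along the hyperplanes $\{x_a=x_b\}$, $a\neq b$, arising from those decompositions $I$ in which $a$ and $b$ occupy two different blocks $I_p,I_q$; there the factor $-x_a+x_b$ (or $-x_b+x_a$) occurs exactly once in $e(W_I)$, so each such summand has at most a \emph{simple} pole. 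I would pair such an $I$ with the decomposition $I'$ obtained by interchanging $a$ and $b$ while keeping every other element fixed. The local computation to be carried out is that, upon restriction to $x_a=x_b$, the two tangent-weight multisets $W_I$ and $W_{I'}$ coincide; meanwhile the single pole factor changes sign between $I$ and $I'$. Granting this, the residues of the two summands along $\{x_a=x_b\}$ are equal and opposite and cancel. As this holds for every pair $(a,b)$, the function $R_{c_1^{d+1}}$ has no poles in codimension one and is thus a polynomial, and the argument of the first paragraph applies verbatim.

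The main obstacle is exactly this residue cancellation, and the subtle point inside it is the claim that the weight multiset is \emph{preserved} at $x_a=x_b$. The naive worry is that weights attached to a block lying strictly \emph{between} $I_p$ and $I_q$ have their signs flipped when $a$ and $b$ exchange blocks; they do, but for each such third element the flip of its pairing with $a$ is matched by the opposite flip of its pairing with $b$, so the two affected weights merely trade places and the unordered collection $W_I$ is unchanged, whence $c_1(W_I)=c_1(W_{I'})$ and (by symmetry of $f$) the numerators agree there. One must also confirm that $a\leftrightarrow b$ is a fixed-point-free involution on the relevant decompositions and that no summand contributes a higher-order pole. Once this bookkeeping is verified the argument is purely formal; I expect no analytic difficulty beyond it. I note finally that the same reasoning shows $R_f\equiv 0$ for \emph{every} homogeneous symmetric $f$ of degree $d+1$, and more generally that $R_f$ is always a polynomial, thereby recovering part (1) and the constancy asserted in part (2) of Theorem \ref{mainresult}; only the identification with Chern numbers genuinely requires Bott's residue formula.
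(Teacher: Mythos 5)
Your proof is correct, but it takes a genuinely different route from the paper's. The paper obtains both identities geometrically: it realizes the decompositions $I$ as the fixed points of a holomorphic circle action on $F(m_1,\ldots,m_{r+1})$ with weights $\{-k_\alpha+k_\beta\}$, and then invokes two facts in degree $d+1$ that lie outside Bott's formula proper --- the sum $\sum_P c_1^{n+1}/\prod_i k_i$ is the Futaki--Morita integral invariant, which vanishes because flag manifolds carry K\"{a}hler--Einstein metrics, while $\sum_P(k_1+\cdots+k_n)=0$ comes from the rigidity of Dolbeault-type operators in \cite{Li4}. For the second identity your argument and the paper's remark essentially coincide (the summand collapses because $c_d(W_I)=e(W_I)$); the real divergence is on $c_1^{d+1}$, where you replace the Futaki-invariant input with elementary algebra: the fixed-point-free involution $I\leftrightarrow I'$ swapping $a$ and $b$, the verification that the restricted weight multisets agree on $\{x_a=x_b\}$ while the unique simple-pole factor changes sign, and the conclusion that $R_f$ is always a polynomial, homogeneous of degree $\deg(f)-d$, symmetric and translation-invariant. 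Your checks are the right ones --- the only delicate case is a third element lying in a block strictly between those of $a$ and $b$, and you handle it correctly. What your route buys is strictly more than the proposition: it shows $R_f\equiv 0$ for \emph{every} homogeneous symmetric $f$ of degree $d+1$, not just $c_1^{d+1}$ and $c_dc_1$, and it reproves part (1) and the constancy in part (2) of Theorem \ref{mainresult} with no geometry at all (this is essentially the standard ``localization contributions sum to a polynomial'' argument). What it loses is the geometric content: the identification of the degree-$d$ constants with Chern numbers, and the interpretation of the first identity as the vanishing of the Futaki invariant, both of which genuinely require the paper's circle-action model and Bott's residue formula.
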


\begin{remark}~
\begin{enumerate}
\item
The reason for the first equality in (\ref{d}) is related to the residue formula of the Futaki integral invariant (\cite{FM2}), which obstructs the existence of K\"{a}hler-Einstein metrics on Fano manifolds (compact complex manifolds with positive first Chern classes). The well-known existence of such a metric on $F(m_1,\ldots,m_{r+1})$ leads to the first equality in (\ref{d}). We shall explain this in more detail in Section \ref{Bottresiduesection}.

\item
In contrast to the nontriviality of the first one in (\ref{d}),  the second one of (\ref{d}) is quite obvious:
$$R_{c_dc_1}(x_1,\ldots,x_N)=\sum_Ic_1(W_I)=\sum_I(\text{the sum of elements in $W_I$})=0,$$
which is indeed a special case of a general result proved by the first author in \cite{Li4}, which we will mention again in Section \ref{Bottresiduesection}.
\end{enumerate}
\end{remark}

Before ending this section, we would like to illustrate Theorem \ref{mainresult} by two simple examples.

\begin{example}\label{example1}
As in Example \ref{examplepermutation} we assume $m_1=\cdots=m_{r+1}=1$ and $r+1=N$. Then we have
\be\begin{split}
&\sum_{\sigma\in\text{S}_N}
\frac{f\big(\{-x_{\sigma(i)}+x_{\sigma(j)}~|~1\leq i<j\leq n\}\big)}
{\prod_{1\leq i<j\leq N}(-x_{\sigma(i)}+x_{\sigma(j)})}
\\
=&\left\{ \begin{array}{ll}
0, & \text{deg}(f)<\frac{N(N-1)}{2},\\
\text{constant depending on $f$}, &
\text{deg}(f)=\frac{N(N-1)}{2}.
\end{array} \right.
\end{split}\nonumber
\ee
\end{example}

\begin{example}\label{exmple2}

Note that
$$F(1,1,2)\xlongequal{C^{\infty}}\frac{U(4)}{U(1)\times
U(1)\times U(2)}\xlongequal{C^{\infty}}\frac{U(4)}{U(1)\times
U(2)\times U(1)}\xlongequal{C^{\infty}}F(1,2,1).$$
Here ``$C^{\infty}$" means diffeomorphism. This means that as smooth manifolds $F(1,1,2)$ and $F(1,2,1)$ are diffeomorphic to each other. However, Borel and Hirzebruch applied the results obtained in \cite{BH1} to show that (\cite[\S 24.11]{BH2}) the Chern numbers $c_1^5$ of them are
\be\label{chernnumber}c_1^5[F(1,1,2)]=4500\neq4860=c_1^5[F(1,2,1)],\ee
which implies that the canonical complex structures on $F(1,1,2)$ and $F(1,2,1)$ are \emph{different}. This gives the first example of two compact complex manifolds which are diffeomorphic to each other, but have different Chern numbers (hence not biholomorphic to each other). We can also apply Theorem \ref{mainresult} to calculate them, whose details are presented in Appendix \ref{appendix}.
This was revisited again by Hirzebruch in \cite{Hi}.
\end{example}

\begin{remark}
It can be shown that (cf. \cite{Hi} or \cite{KT}) the complex flag manifolds $F(1,1,n)$ can be identified with $\mathbb{P}(T\mathbb{C}P^{n+1})$, the projectivizations of the holomorphic tangent of the complex projective space $\mathbb{C}P^{n+1}$. This point of view was taken up in \cite[\S 4]{KT} to deduce the Chern number $c_1^{2n+1}[F(1,1,n)]$ for general $n$. In principle, this formula can also be obtained directly via our Theorem \ref{mainresult}. However, practicely for general $n$ the expression we need to deal with is quite complicated and thus it is difficult to obtain the closed formula as in \cite[p. 604, Theorem 3]{KT} without resorting to the characteristic classes of $\mathbb{P}(T\mathbb{C}P^{n+1})$.
\end{remark}

\section{Bott's residue formula}\label{Bottresiduesection}
In this section we briefly review Bott's residue formula for circle actions and give some related remarks.

Suppose that $M$ is a compact complex manifold with complex dimension $n$ and it is equipped with a holomorphic circle action with isolated fixed points. We denote by $P$ such an isolated fixed point. At each $P$ there are well-defined $n$ integers
$k_{1},\cdots, k_{n}$ (not necessarily distinct) induced
from the isotropy representation of this circle action on the holomorphic tangent space $T_PM$. Namely, the circle acts on $T_PM\cong\mathbb{C}^n$ in the following manner:
\be\label{weightdefinitin}\begin{split}
S^1\times\mathbb{C}^n&\longrightarrow\mathbb{C}^n,\\
\big(g,(v_1,\ldots,v_n)\big)&\longmapsto(g^{k_1}\cdot v_1,\ldots,g^{k_n}\cdot v_n).\end{split}\ee
Note
that these $k_{1},\cdots,k_{n}$ are \emph{nonzero} as
these fixed points $P$ are isolated and called \emph{weights} at $P$ with respect to this circle action.

Following the notation and symbols in Section \ref{mainresultsection}, let $f(x_{1},\cdots,x_{n})$ be a homogeneous
symmetric polynomial in the variables $x_{1},\cdots,x_{n}$. Then
$f(x_{1},\cdots,x_{n})$ can be written in an essentially unique way
in terms of the elementary symmetric polynomials
$\widetilde{f}(c_{1},\cdots,c_{n})$, where
$c_{i}=c_{i}(x_{1},\cdots,x_{n})$ is the $i$-th elementary symmetric
polynomial of $x_{1},\cdots,x_{n}$. For instance,
$$f(x_{1},\cdots,x_{n})=x_1^2+\cdots x_n^2=c_1^2-2c_2=\widetilde{f}(c_1,\ldots,c_n).$$

With the above-mentioned notation and symbols understood,  we can now state a version of the Bott's residue formula \cite{Bo},
which reduces the calculation of Chern numbers of $M$ to
the weights $k_i$ around the isolated fixed points of this holomorphic circle action.
\begin{theorem}[Bott's residue formula]\label{BRF0}
Suppose a compact complex $n$-dimensional manifold $M$ admits a holomorphic circle action with isolated fixed points $\{P\}$ and the weights around each $P$ are denoted by $k_{1},\cdots,k_{n}$, which depend on $P$. Then we have
\begin{eqnarray}\label{BRF}
\sum_P\frac{f(k_{1},\ldots,k_{n})}{\prod_{i=1}^{n}k_i}
=\left\{ \begin{array}{ll}
0, & \text{deg}(f)<n,\\
\int_M\widetilde{f}(c_{1}(M),\cdots,c_{n}(M)), &
\text{deg}(f)=n.
\end{array} \right.
\end{eqnarray}
Here the sum is over all the isolated fixed points $P$ and $c_i(M)$ denotes the $i$-th Chern class of $M$.
\end{theorem}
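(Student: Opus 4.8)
\emph{Proof proposal.} The plan is to deduce the formula from the Atiyah--Bott--Berline--Vergne equivariant localization theorem applied to the equivariant Chern classes of the holomorphic tangent bundle. First I would pass to the Borel equivariant cohomology $H^*_{S^1}(M)$, a module over $H^*_{S^1}(\text{pt})=\mathbb{R}[u]$ with $u$ of real degree $2$. Since the circle action is holomorphic, $TM$ is an $S^1$-equivariant complex vector bundle and hence carries equivariant Chern classes $c_i^{S^1}(TM)\in H^{2i}_{S^1}(M)$ restricting to the ordinary classes $c_i(M)$ under the forgetful homomorphism. Writing $f=\widetilde{f}(c_1,\ldots,c_n)$ as in the statement, I would introduce the equivariant class $\widetilde{f}\big(c_1^{S^1}(TM),\ldots,c_n^{S^1}(TM)\big)$ and compute its equivariant pushforward to a point in two ways.

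The local model at a fixed point is the first essential computation. At each isolated fixed point $P$ the isotropy representation splits $T_PM\cong\mathbb{C}^n$ into one-dimensional weight spaces with weights $k_1,\ldots,k_n$ as in \eqref{weightdefinitin}; using that the equivariant first Chern class of a weight-$k$ line is $ku$, one finds $c_i^{S^1}(TM)|_P=c_i(k_1,\ldots,k_n)\,u^i$, and therefore, substituting and using homogeneity of $f$,
\[
\widetilde{f}\big(c_1^{S^1},\ldots,c_n^{S^1}\big)\big|_P=f(k_1u,\ldots,k_nu)=u^{\deg f}f(k_1,\ldots,k_n).
\]
The equivariant Euler class of $T_PM$ (the normal data of the point) is $\prod_{i=1}^n(k_iu)=u^n\prod_ik_i$, which is invertible after inverting $u$ precisely because each $k_i\neq 0$, a consequence of $P$ being isolated.

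Substituting these into the localization formula yields, in the localized ring $\mathbb{R}[u,u^{-1}]$,
\be
\begin{split}
\int_M^{S^1}\widetilde{f}\big(c_1^{S^1},\ldots,c_n^{S^1}\big)
&=\sum_P\frac{f(k_1,\ldots,k_n)\,u^{\deg f}}{u^n\prod_{i}k_i}\\
&=u^{\,\deg f-n}\sum_P\frac{f(k_1,\ldots,k_n)}{\prod_{i}k_i}.
\end{split}
\ee
The remaining step is a degree count. Equivariant integration over $M$ lowers complex degree by $n$, so the left-hand side is a genuine element of $\mathbb{R}[u]$ of complex degree $\deg f-n$. If $\deg f<n$ this degree is negative, forcing the left-hand side to vanish and hence $\sum_P f(k_1,\ldots,k_n)/\prod_ik_i=0$. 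If $\deg f=n$ the left-hand side is a constant (degree zero in $u$); applying the forgetful map $u\mapsto 0$ identifies it with the non-equivariant integral $\int_M\widetilde{f}(c_1(M),\ldots,c_n(M))$, which is thus equal to $\sum_P f(k_1,\ldots,k_n)/\prod_ik_i$, as claimed.

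I expect the real obstacle to be the localization theorem itself, namely the passage from $\int_M^{S^1}\alpha$ to the sum of local contributions $\sum_P\alpha|_P/e^{S^1}(T_PM)$. Establishing this from scratch requires either the Atiyah--Bott localization theorem in equivariant cohomology (the fact that restriction to the fixed-point set becomes an isomorphism after inverting $u$, together with the Euler-class self-intersection computation) or, closer to Bott's original argument, an explicit \v{C}ech--de Rham residue analysis around the zeros of the vector field generating the action; this is the technical heart, while everything above is formal once it is granted. A third route, following Atiyah and Singer, would derive the same identity by specializing the holomorphic Lefschetz fixed point formula, trading equivariant cohomology for index theory.
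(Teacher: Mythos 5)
The paper does not prove Theorem \ref{BRF0} at all: it is quoted as a classical result with a citation to Bott's original paper \cite{Bo}, so there is no internal argument to compare yours against line by line. That said, your derivation via Atiyah--Bott--Berline--Vergne localization is correct and complete modulo the localization theorem itself, which you rightly single out as the technical heart. The local computations are all right: $c_i^{S^1}(TM)|_P=c_i(k_1,\ldots,k_n)u^i$, the restriction $\widetilde f(c^{S^1})|_P=u^{\deg f}f(k_1,\ldots,k_n)$ by homogeneity, the equivariant Euler class $u^n\prod_i k_i$ invertible because the $k_i$ are nonzero at an isolated fixed point, and the degree count forcing the pushforward (a genuine element of $\mathbb{R}[u]$, not just of the localized ring) to vanish when $\deg f<n$ and to equal the ordinary Chern number when $\deg f=n$ via compatibility of equivariant and ordinary integration. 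This is the standard modern route and is genuinely different in flavor from Bott's original 1967 argument, which predates equivariant cohomology and proceeds by a Chern--Weil/transgression analysis: one chooses an invariant metric, builds a form out of the curvature and the $(1,0)$-part of the vector field that is exact away from the zero set, and computes the resulting residues at the zeros by a local Stokes argument. The equivariant-cohomology route buys conceptual clarity and makes the ``vanishing for $\deg f<n$'' part transparent as a degree count, at the price of importing the localization isomorphism as a black box; Bott's route is self-contained but hides the low-degree vanishing inside the same residue computation. Either is an acceptable proof of the statement as used in this paper.
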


\begin{remark}\label{remark}~
\begin{enumerate}
\item
If $\lambda$ is partition of weight $n$ and $\widetilde{f}(c_1,\ldots,c_n)=c_{\lambda}$, this formula precisely gives a method to calculate the Chern number of $M$ with respect to the partition $\lambda$ in terms of the weights $k_i$ around the isolated fixed pionts.

\item
The conclusion that the left hand side of (\ref{BRF}) vanishes when $\text{deg}(f)<n$ is by no means trivial. Thus this formula provides us
a family of vanishing-type results about the weights $k_i$ around the isolated fixed points rather than just how to
compute the Chern numbers of $M$. This observation played a dominant role in establishing main results in the first author's previous works (\cite{Li1}, \cite{Li1.5}, \cite{Li2}, \cite{LL1} and \cite{LL2}).
\end{enumerate}
\end{remark}

Note that (\ref{BRF}) tells us \emph{nothing} about those $f$ with $\text{deg}(f)>n$. For general $f$ with $\text{deg}(f)>n$ the left hand side of (\ref{BRF}) may not vanish or may depend on the weights $k_i$, which can be easily tested by the data presented in Appendix \ref{appendix}. Nevertheless, for two specail $f$ of degree $n+1$: $c_1^{n+1}$ and $c_nc_1$, we have the following

\begin{theorem}\label{BRF2}
We make the same assumptions as in Theorem \ref{BRF0}.
\begin{enumerate}
\item
If $M$ admits a K\"{a}hler-Einstein metric, then
\be\label{Futaki}
\sum_P\frac{c_1^{n+1}(k_{1},\ldots,k_{n})}{\prod_{i=1}^{n}k_i}
=\sum_P\frac{(k_{1}+\cdots+k_{n})^{n+1}}{\prod_{i=1}^{n}k_i}=0\ee

\item
The following identity holds {\rm(}unconditionally{\rm)}
\be\label{li}\sum_P\frac{c_nc_1(k_{1},\ldots,k_{n})}{\prod_{i=1}^{n}k_i}
=\sum_P(k_{1}+\cdots+k_{n})=0.\ee
\end{enumerate}
\end{theorem}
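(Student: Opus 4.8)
The plan is to treat both identities uniformly by extending Bott's formula \eqref{BRF} into the degree $n+1$ regime via the Atiyah--Bott--Berline--Vergne localization theorem, of which \eqref{BRF} is the low-degree shadow. For an equivariantly closed class $\beta\in H^{2k}_{S^1}(M)$ the localization theorem reads $\int_M\beta=\sum_P\beta|_P/e^{S^1}(N_P)$, where $e^{S^1}(N_P)=\prod_{i=1}^n(k_iu)$ and $u$ generates $H^*_{S^1}(\mathrm{pt})\cong\mathbb{R}[u]$. Since the equivariant Chern classes restrict to a fixed point as $c_i^{S^1}(TM)|_P=c_i(k_1,\ldots,k_n)u^i$, applying this with $\beta=(c_1^{S^1}(TM))^{n+1}$ and with $\beta=c_1^{S^1}(TM)\,c_n^{S^1}(TM)$ gives
\[
\int_M(c_1^{S^1}(TM))^{n+1}=u\sum_P\frac{(k_1+\cdots+k_n)^{n+1}}{\prod_i k_i},\qquad \int_M c_1^{S^1}(TM)\,c_n^{S^1}(TM)=u\sum_P(k_1+\cdots+k_n).
\]
Both integrals lie in $H^2_{S^1}(\mathrm{pt})=\mathbb{R}u$, so \eqref{Futaki} and \eqref{li} are precisely the statements that these two equivariant integrals vanish. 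First I would record this reduction.

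For part (1), the idea is to recognize $\int_M(c_1^{S^1}(TM))^{n+1}$ as a nonzero multiple of the Futaki invariant $\mathcal{F}(X)$ of the holomorphic vector field $X$ generating the circle action. This is the content of the Futaki--Morita residue formula \cite{FM2}: for a field with isolated nondegenerate zeros, $\mathcal{F}(X)=C_n\sum_P(\operatorname{tr}L_P)^{n+1}/\det L_P$ with $L_P$ the linearization at $P$ and $C_n\neq0$ a dimensional constant; for a circle action the eigenvalues of $L_P$ are exactly the weights $k_i$, so $\operatorname{tr}L_P=\sum_i k_i$ and $\det L_P=\prod_i k_i$. The Futaki obstruction then forces $\mathcal{F}\equiv0$ whenever a K\"ahler--Einstein metric exists, whence the sum vanishes. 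The routine but essential bookkeeping is to pin down the normalization so that it is genuinely $c_1^{n+1}$ (and not some other weight-$(n+1)$ combination) that appears, and to check that $X$, generating a compact group action, lies in the Lie algebra on which $\mathcal{F}$ is defined.

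For part (2), the summand simplifies at once, $c_nc_1(k_1,\ldots,k_n)/\prod_i k_i=k_1+\cdots+k_n$, so the claim is the unconditional vanishing $\sum_P(k_1+\cdots+k_n)=0$, equivalently $\int_M c_1^{S^1}(TM)\,e^{S^1}(TM)=0$. Here $e^{S^1}(TM)=c_n^{S^1}(TM)$ is the equivariant Euler class, whose localization absorbs the denominators, so that $\int_M\alpha\,e^{S^1}(TM)=\sum_P\alpha|_P$ for every $\alpha$; with $\alpha=c_1^{S^1}(TM)$ this reproduces $\sum_P\sum_i k_i$. I would deduce the vanishing from the general theorem of the first author \cite{Li4}. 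I would stress that, unlike part (1), this cannot come from the vanishing half of \eqref{BRF} (which only governs $\deg f<n$), nor from the rigidity of an index-theoretic genus, since every fixed-point formula of Todd type produces the denominator $\prod_i k_i$, whereas $\sum_P\sum_i k_i$ carries no such denominator; the vanishing is an Euler-class phenomenon that genuinely uses the integrability of the complex structure.

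The hard part will be exactly this last vanishing in part (2). Topology alone does not force it: the class $c_1^{S^1}(TM)\,c_n^{S^1}(TM)$ maps to $c_1(TM)c_n(TM)=0$ in $H^{2n+2}(M)$ and is thus $u$-divisible, but its integral is a priori an arbitrary multiple of $u$, and already on $\mathbb{C}P^1$ and $\mathbb{C}P^2$ one sees that the individual weight sums $\sum_i k_i(P)$ vary from point to point and cancel only globally. Establishing this cancellation in general is the crux, and it is where holomorphicity (via \cite{Li4}) must enter; by contrast, once the two equivariant integrals above are in hand, both parts follow formally.
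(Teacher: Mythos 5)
Your proof follows the same route as the paper: part (1) identifies the sum with the Futaki invariant via the Futaki--Morita residue formula \cite[Prop.~2.3]{FM2} and invokes Futaki's vanishing theorem on K\"ahler--Einstein manifolds, and part (2) is delegated to \cite[Corollary 1.3]{Li4}, exactly as in the remark the paper gives in place of a proof. One small correction to your closing commentary: the paper notes that \cite{Li4} is itself an application of the rigidity of Dolbeault-type operators on compact (almost) complex manifolds, so the vanishing in (2) \emph{is} an index-theoretic rigidity phenomenon (the denominator $\prod_i k_i$ cancels against the numerator of a suitably twisted operator) and it does not require integrability of the complex structure.
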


\begin{remark}~
\begin{enumerate}
\item
The left hand side of (\ref{Futaki}) was showed by Futaki-Morita (\cite[Prop. 2.3]{FM2}) to be the Futaki integral invariant with respect to the holomorphic vector field generated by the circle action, which vanishes if $M$ admits a K\"{a}hler-Einstein metric (\cite{Fut}). Some of the considerations in \cite{FM1} and \cite{FM2} have recently been improved by the first author in \cite{Li1.5}.

\item
(\ref{li}) is a particular case of \cite[Corollary 1.3]{Li4} by the first author, which is in turn an application of the rigidity phenomenon of Dolbeault-type operators on compact (almost) complex manifolds.
\end{enumerate}
\end{remark}

After constructing in the next section concrete circle action on the complex flag manifolds $F(m_1,\ldots,m_{r+1})$ with isolated fixed points and with the desired weights we shall see that Theorem \ref{mainresult} and Proposition \ref{mainprop} follow from Theorems \ref{BRF0} and \ref{BRF2} respectively.

\section{Complex flag manifolds and their coordinate charts}\label{preliminarysection}
In this preliminary section we recall the matrix-description and a typical local coordinate chart of complex flag manifolds , which are crucial to explicitly determine the weights of circle actions constructed in Section \ref{constructionsection}. Although these materials must be well-known to experts, we are unfortunately not able to find a suitable reference to them,
except a book written in Chinese by Q.-K. Lu (\cite{Lu}).
Therefore for reader's convenience we present these materials in detail in this section. As the description of the local coordinate charts of $F(m_1,\ldots,m_{r+1})$ is a little bit complicated for general $r$ and $m_i$, at least at the first glance, to make this section more accessible to readers not familiar with these materials, we would like to first illustrate the idea for complex Grassmannian in detail, which can be found, for example, in \cite[p. 193]{GH}.

First recall the following standard fact for complex projective spaces.
\begin{example}\label{examplecp}
\be
\mathbb{C}P^n:=\big\{(z_1,\ldots,z_{n+1})
\in\mathbb{C}^{n+1}\ \{0\}\big\}/\sim=:\{[z_1,\ldots,z_{n+1}]\},
\nonumber\ee
where
$(z_1,\ldots,z_{n+1})\sim(w_1,\ldots,w_{n+1})$ if and only if there exists $t\in\mathbb{C}-\{0\}$ such that $(z_1,\ldots,z_{n+1})=t(w_1,\ldots,w_{n+1})$
 and $[\cdot]$ denotes the coset element in the quotient space. Let
$$U_i:=\{[z_1,\ldots,z_{n+1}]\in\mathbb{C}P^n~|~z_i\neq 0\},\qquad 1\leq i\leq n+1.$$

Then
\be\label{cp}
\begin{split}
\varphi_i:~U_i&\stackrel{\cong}{\longrightarrow}\mathbb{C}^n\\ [z_1,\ldots,z_{n+1}]&\longmapsto(\frac{z_1}{z_i},\ldots,\widehat{\frac{z_i}{z_i}},\ldots,
\frac{z_{n+1}}{z_i}),\end{split}\ee
where $``\widehat{\cdot}"$ means deletion. These $(U_i,\varphi_i)$ $(1\leq i\leq n+1)$ form local coordinate charts for $\mathbb{C}P^n$.
\end{example}

The above construction in Example \ref{examplecp} can be extended to complex Grassmannian manifolds as follows (\cite[p. 193]{GH}).
\begin{example}\label{examplegrass}
Let $\text{Gr}(m,n)$ denotes the complex Grassmannian manifold consisting of complex $m$-dimensional sublinear spaces in $\mathbb{C}^{m+n}$. Note that $\text{dim}_{\mathbb{C}}\text{Gr}(m,n)=mn$. Denote by $M(r,s)$ the set of matrices with $r$ rows and $s$ columns and $\text{GL}(r,\mathbb{C})$ the general linear group of rank $r$ over $\mathbb{C}$. An element in $\text{Gr}(m,n)$ can be represented by a set of $m$ linearly independent column vectors in $\mathbb{C}^{m+n}$ spanning this element, i.e., by a matrix $A=(\alpha_1,\ldots,\alpha_m)\in M(m+n,m)$ of rank $m$ such that the column vectors $\alpha_1,\ldots,\alpha_m$ form a basis of this element. Obviously two matrices $A,B\in M(m+n,m)$ with $\text{rank}(A)=\text{rank}(B)=m$ represent the same element in $\text{Gr}(m,n)$ if and only if there exists $Q\in\text{GL}(m,\mathbb{C})$ such that $A=BQ$. Thus we have the following matrix-description for complex Grassmannian manifolds.
$$\text{Gr}(m,n)=\{[A]\}:=\{A~|~A\in M(m+n,m),\text{rank}(A)=m\}/\sim,$$
where $A\sim B$ if and only if there exists $Q\in\text{GL}(m,\mathbb{C})$ such that $A=BQ$.

With the notation given above, the local coordinate charts of $\text{Gr}(m,n)$ can be described in the following manner. Let $I=(i_1,\ldots,i_m)$ be an increasing integer sequence such that $1\leq i_1< i_2<\ldots< i_m\leq m+n$. For $A\in M(m+n,m)$, denote by $A_I\in M(m,m)$ the $I$-minor of $A$ consisting of its $i_1,\ldots,i_m$ rows, i.e., if $A:=(\beta_1,\ldots,\beta_{m+n})^{\text{t}}$, then $A_I:=(\beta_{i_1},\ldots,\beta_{i_{m}})^{\text{t}}$. Here ``$t$" denotes the transpose of a matrix. Then $$U_I:=\{[A]\in\text{Gr}(m,n), A_I\in\text{GL}(m,\mathbb{C})\}.$$
Clearly this definition is independent of the choice of $A$ in the coset as $A=BQ$ implies that $A_I=B_IQ$ and thus $U_I$ is well-defined. Note that inside each coset
$[A]\in U_I$ there contains a unique matrix representative such that its $I$-minor is the identity matrix. Indeed, arbitarily choose $A\in[A]$, $A\cdot(A_I)^{-1}$ is the desired representative. In this case, the resulting $mn$ entries in the matrix $A\cdot(A_I)^{-1}$ can be used to be the local coordinates of $[A]$. More precisely, if we define $I^c$ to be the increasing integer sequence complemenatry to $I$ with respect to $\{1,\ldots,m+n\}$, i.e., $I^c=\{1,\ldots,m+n\}-I,$ then
\be\label{gra}
\begin{split}\varphi_I:~U_I&\stackrel{\cong}{\longrightarrow}M(n,m)\cong\mathbb{C}^{mn},\\
[A]&\longmapsto A_{I^c}\cdot(A_I)^{-1}.\end{split}\ee
Note that the roles played by $A_{I^c}$ and $A_I$ in (\ref{gra}) are the same as those of $(z_1,\ldots,\widehat{z_i},\ldots,z_{n+1})$ and $z_i$ in (\ref{cp}).
\end{example}

With the construction of the local coordinate charts for complex Grassmannians in Example \ref{examplegrass} in mind, we can now proceed to the general complex flag manifolds $F(m_1,\ldots,m_r,m_{r+1})$.

We still use the notation and symbols introduced in Section \ref{section1}. A flag $(L_1,\ldots,L_r)\in F(m_1,\ldots,m_r,m_{r+1})$ can be represented by a matrix $A=(A_1,A_2,\ldots,A_r)\in M(N,\sum_{j=1}^rm_j)$ with $A_j\in M(N,m_j)$ and $\text{rank}(A)=\sum_{j=1}^rm_j$ such that the column vectors of the matrix $(A_1,\ldots,A_i)$ form a basis of the linear subspace $L_i$. The following lemma tells us that under what conditions two matrices represent the same flag.
\begin{lemma}\label{equivalentlemma}
Two matrices $$A=(A_1,A_2,\ldots,A_r), B=(B_1,B_2,\ldots,B_r)\in M(N,\sum_{j=1}^rm_j)$$
 with $\text{rank}(A)=\text{rank}(B)=\sum_{j=1}^rm_j$ represent the same flag $(L_1,\ldots,L_r)$ if and only if there exists a block upper triangular matrix $Q\in\text{GL}(\sum_{j=1}^rm_j,\mathbb{C})$ of the following form
 \be\label{lemmaQ}Q=\left(\begin{array}{cccc}
                             Q_{11} & Q_{12} &\cdots &Q_{1r}\\
                             0 & Q_{22} &\cdots&Q_{2r}\\
                             \vdots&\vdots&\ddots&\vdots\\
                             0&0&\cdots&Q_{rr}
                           \end{array}
                         \right),\qquad Q_{ii}\in\text{GL}(m_i,\mathbb{C}),\qquad
                         Q_{ij}\in M(m_i,m_j),\ee
                         such that $AQ=B$.
\end{lemma}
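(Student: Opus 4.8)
\emph{Proof proposal.} The plan is to prove the two implications separately, in each case reading off the relevant block structure that is forced by the filtration $L_1\subset\cdots\subset L_r$. Throughout write $s_i:=\sum_{j=1}^{i}m_j$, so that the columns of $(A_1,\ldots,A_i)$ (respectively of $(B_1,\ldots,B_i)$) form a basis of the $s_i$-dimensional space $L_i$, and denote by $Q^{(i)}$ the leading principal block submatrix of $Q$ consisting of the blocks $Q_{kl}$ with $k,l\le i$.

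For the \emph{sufficiency} direction I would suppose $AQ=B$ with $Q$ of the stated block upper triangular form. Block multiplication gives $B_k=\sum_{l=1}^{k}A_lQ_{lk}$, the sum running only up to $k$ because $Q_{lk}=0$ for $l>k$. Hence every column of $B_k$ lies in the span of the columns of $A_1,\ldots,A_k$, i.e. in $L_k$, and more precisely $(B_1,\ldots,B_i)=(A_1,\ldots,A_i)\,Q^{(i)}$. Since $Q^{(i)}$ is again block upper triangular with invertible diagonal blocks $Q_{11},\ldots,Q_{ii}$, its determinant $\prod_{k=1}^{i}\det Q_{kk}$ is nonzero, so $Q^{(i)}$ is invertible; multiplying the full-column-rank matrix $(A_1,\ldots,A_i)$ by such a $Q^{(i)}$ preserves the rank $s_i$. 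Thus the columns of $(B_1,\ldots,B_i)$ span an $s_i$-dimensional subspace of $L_i$, which must be all of $L_i$. As this holds for every $i$, the two matrices represent the same flag.

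For the \emph{necessity} direction I would assume that $A$ and $B$ represent the same flag. Each column of $B_k$ lies in $L_k$, which by hypothesis is spanned by the linearly independent columns of $A_1,\ldots,A_k$; hence there are \emph{unique} matrices $Q_{lk}\in M(m_l,m_k)$ $(1\le l\le k)$ with $B_k=\sum_{l=1}^{k}A_lQ_{lk}$. Setting $Q_{lk}:=0$ for $l>k$ assembles these into a block upper triangular matrix $Q$ of the shape (\ref{lemmaQ}) satisfying $AQ=B$, the vanishing of the subdiagonal blocks being automatic from the inclusion $L_k\subset L_{k+1}$.

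The one genuinely nontrivial point, which I expect to be the main obstacle, is verifying that the diagonal blocks $Q_{ii}$ are invertible. Restricting to leading blocks gives $(B_1,\ldots,B_i)=(A_1,\ldots,A_i)\,Q^{(i)}$, whence $s_i=\text{rank}(B_1,\ldots,B_i)\le\text{rank}\,Q^{(i)}$; as $Q^{(i)}$ is an $s_i\times s_i$ matrix it is therefore invertible. Since $\det Q^{(i)}=\prod_{k=1}^{i}\det Q_{kk}$, comparing consecutive indices yields $\det Q_{ii}=\det Q^{(i)}/\det Q^{(i-1)}\ne 0$, so $Q_{ii}\in\text{GL}(m_i,\mathbb{C})$, as required.
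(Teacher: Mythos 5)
Your proof is correct, and its skeleton agrees with the paper's: both directions reduce to the block multiplication $B_k=\sum_{l\leq k}A_lQ_{lk}$, and the only delicate point in the necessity direction is the invertibility of the diagonal blocks $Q_{ii}$, which you correctly identify as the crux. Where you genuinely diverge is in how that point is handled. The paper also expands $A_i=\sum_{j\leq i}B_j\widetilde{Q}_{ji}$, substitutes one relation into the other as in (\ref{lemmafor2}), and reads off $\widetilde{Q}_{ii}Q_{ii}=E_{m_i}$ from the linear independence of the columns of $(B_1,\ldots,B_i)$ --- that is, it exhibits an explicit (left, hence two-sided) inverse for each square block $Q_{ii}$. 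You instead note that $(B_1,\ldots,B_i)=(A_1,\ldots,A_i)Q^{(i)}$ forces $\mathrm{rank}\,Q^{(i)}=s_i$, so each leading principal block submatrix $Q^{(i)}$ is invertible, and then extract $\det Q_{ii}=\det Q^{(i)}/\det Q^{(i-1)}\neq 0$ from the multiplicativity of the determinant over the block triangular structure. Both arguments are complete; yours is slightly more economical in that it never introduces the reverse coefficients $\widetilde{Q}_{ji}$, while the paper's produces the inverse blocks explicitly (which is in the same spirit as the later construction of $Q_A$ in Proposition \ref{prop}). In the sufficiency direction the two proofs are essentially identical: the paper concludes $\widetilde{L}_i=L_i$ from $\widetilde{L}_i\subset L_i$ plus equality of dimensions, and your rank-preservation argument via the invertibility of $Q^{(i)}$ is just a self-contained way of supplying that dimension count.
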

\begin{proof}
The ``only if" part. Suppose the matrices $A$ and $B$ represent the same flag $(L_1,\ldots,L_r)$, which implies that there exist $Q_{ji}, \widetilde{Q}_{ji}\in M(m_j,m_i)$ for $1\leq j\leq i\leq r$ such that
\be\label{lemmafor1}B_i=\sum_{j=1}^iA_jQ_{ji},\qquad A_i=\sum_{j=1}^iB_j\widetilde{Q}_{ji}.\ee
It suffices to show that these $Q_{ii}$ are nonsingular.
Indeed, we have from (\ref{lemmafor1}) that
\be\label{lemmafor2}B_i=\sum_{j=1}^i
\big[(\sum_{k=1}^jB_k\widetilde{Q}_{kj})Q_{ji}\big]=B_i\cdot
(\widetilde{Q}_{ii}Q_{ii})+\sum_{j=1}^{i-1}\big[B_j
(\sum_{k=j}^i\widetilde{Q}_{jk}Q_{ki})\big],\ee
Note that $\{B_1,\ldots,B_i\}$ is a basis of $L_i$. Then (\ref{lemmafor2}) tells us that
$$\widetilde{Q}_{ii}Q_{ii}=E_{m_i},\qquad \sum_{k=j}^i\widetilde{Q}_{jk}Q_{ki}=(0)\in M(m_j,m_i),$$
where $E_{m_i}$ is the identity matrix of rank $m_i$. This means that these $Q_{ii}$ are nonsingular.

The ``if" part. Suppose $A$ and $B$ represent two flags $(L_1,\ldots,L_r)$ and $(\widetilde{L}_1,\ldots,\widetilde{L}_r)$ respectively and satisfy (\ref{lemmaQ}). Then $B_i=\sum_{j=1}^iA_jQ_{ji}$. Then means that $\widetilde{L}_i\subset L_i$. Notice, however, that $$\text{dim}_{\mathbb{C}}\widetilde{L}_i=\text{dim}_{\mathbb{C}}L_i
=\sum_{j=1}^im_j,$$
which tells us that $\widetilde{L}_i=L_i.$
\end{proof}

With Lemma \ref{equivalentlemma} in hand we can now describe general complex flag manioflds as follows.
\be\begin{split} &F(m_1,\ldots,m_r,m_{r+1})=\{[A]\}\\
:=&\Big\{A=(A_1,A_2,\ldots,A_r)~\big|~A_j\in M(N,m_j),\text{rank}(A)
=\sum_{j=1}^rm_j\Big\}\big/\sim,\end{split}\ee
where $A\sim B$ if and only if $B=AQ$ with $Q$ satisfying (\ref{lemmaQ}).

Our next task is to give a description of local coordinate charts in the spirit of (\ref{gra}).

\begin{notation}
Suppose that $I=(I_1,\ldots,I_{r+1})$ is a decomposition of $\{1,2,\ldots,N\}$ (see (\ref{decomposition}) for its definition) and $A=(A_1,\ldots,A_r)$ with $A_i\in  M(N,m_i)$.
$$A_i^{(j)}:=\text{$I_j$-submatrix of $A_i$ consisting of the rows in $I_j$},$$
i.e., if $A_i=(\beta_1,\ldots,\beta_N)^{t}$ and $I_j=(k_1,\ldots,k_{m_j})$ with $1\leq k_1<\cdots<k_{m_j}\leq N$, then $$A_i^{(j)}:=(\beta_{k_1},\ldots,\beta_{k_{m_j}})^{t}\in M(m_j,m_i).$$
Define
$$A(I):=\left(\begin{array}{cccc}
                             A_1^{(1)} & A_2^{(1)} &\cdots &A_r^{(1)}\\
                             A_1^{(2)} & A_2^{(2)} &\cdots&A_r^{(2)}\\
                             \vdots&\vdots&\ddots&\vdots\\
                             A_1^{(r+1)}&A_2^{(r+1)}&\cdots&A_r^{(r+1)}
                           \end{array}
                         \right).$$

Note that $A(I)$ is nothing but rearranging the rows of $A$ in terms of the data $I=(I_1,\ldots,I_{r+1})$.
\end{notation}

For a decomposition $I=(I_1,\ldots,I_{r+1})$ of $\{1,\ldots,N\}$, we define
\be\label{UI}\begin{split}
 &U_I:=\\
 &\Big\{[(A_1,\ldots,A_r)]\in F(m_1,\ldots,m_{r+1})~\big|~
\begin{pmatrix}A_1^{(1)}&\cdots& A_i^{(1)}\\
\vdots&\ddots&\vdots\\
A_1^{(i)}&\cdots&A_i^{(i)}\end{pmatrix}~\text{are nonsingular for $1\leq i\leq r$}
\Big\}.\end{split}\ee
This definition is independent of the choice of the matrix representative in the coset as $(AQ)(I)=A_I\cdot Q$.
Our next proposition shows that these $U_I$ can be viewed as local coordinate charts.

\begin{proposition}\label{prop}~
\begin{enumerate}
\item
\be\label{unique1}\bigcup_IU_I=F(m_1,\ldots,m_{r+1}).\ee

\item
For each matrix representative $A=(A_1,\ldots,A_r)$ in $[A=(A_1,\ldots,A_r)]\in U_I$, there exists a unique $Q_A$ of the form as that of (\ref{lemmaQ}) such that $(A\cdot Q_A)(I)=A(I)\cdot Q_A$ is of the following form
\be\label{uniqueform}(A\cdot Q_A)(I)=A(I)\cdot Q_A
=\begin{pmatrix}E_{m_1}&0&\cdots&0\\
\ast&E_{m_2}&\cdots&0\\
\ast&\ast&\cdots&0\\
\vdots&\vdots&\ddots&\vdots\\
\ast&\ast&\cdots&E_{m_r}\\
\ast&\ast&\cdots&\ast
\end{pmatrix},\ee
where $E_m$ denotes the identity matrix of rank $m$.
\end{enumerate}
\end{proposition}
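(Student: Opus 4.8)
The plan is to translate the defining minor conditions of $U_I$ into a statement about coordinate projections, and then to prove the covering claim (1) by a greedy extension and the normalization claim (2) by a block $LU$ decomposition. First I would record the geometric meaning of $U_I$: writing $J_i:=I_1\cup\cdots\cup I_i$ and letting $\pi_{J_i}:\mathbb{C}^N\to\mathbb{C}^{J_i}$ be the projection onto the coordinates indexed by $J_i$, the columns of $(A_1,\ldots,A_i)$ form a basis of $L_i$, and the square matrix appearing in (\ref{UI}) for the index $i$ is exactly the matrix of $\pi_{J_i}|_{L_i}:L_i\to\mathbb{C}^{J_i}$ in that basis. Since $\dim_{\mathbb{C}}L_i=|J_i|$, nonsingularity of that matrix is equivalent to $\pi_{J_i}|_{L_i}$ being an isomorphism; hence $[A]\in U_I$ if and only if $\pi_{J_i}|_{L_i}$ is an isomorphism for every $1\le i\le r$, which in particular reconfirms that the condition depends only on the flag.

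For part (1) I would construct a suitable $I$ greedily, building $I_1,\ldots,I_r$ one block at a time and setting $I_{r+1}:=\{1,\ldots,N\}\setminus J_r$. The inductive step is the following linear-algebra lemma: if $W\subset V\subset\mathbb{C}^N$ with $\dim_{\mathbb{C}}V/W=m$, and if $J\subset\{1,\ldots,N\}$ is such that $\pi_J|_W$ is an isomorphism onto $\mathbb{C}^J$, then there is a subset $I\subset\{1,\ldots,N\}\setminus J$ with $|I|=m$ such that $\pi_{J\cup I}|_V$ is an isomorphism. To prove it, choose a basis $w_1,\ldots,w_s$ of $W$ with $\pi_J(w_t)$ the standard basis of $\mathbb{C}^J$, extend to a basis $w_1,\ldots,w_s,v_1,\ldots,v_m$ of $V$, and correct each $v_l$ to $v_l':=v_l-\sum_t\pi_J(v_l)_t\,w_t$ so that $v_l'\in V\cap\ker\pi_J$. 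The $v_l'$ are linearly independent, so their span has a nonsingular $m\times m$ minor on some index set $I$ inside the complement of $J$; expanding an arbitrary $v=\sum_t a_t w_t+\sum_l b_l v_l'$ then shows $\pi_{J\cup I}(v)=0$ forces all $a_t=b_l=0$, so $\pi_{J\cup I}|_V$ is injective and, by the dimension count, an isomorphism. Applying this with $(W,V)=(L_{i-1},L_i)$ and $J=J_{i-1}$, starting from $L_0=0$, produces the desired decomposition, giving (\ref{unique1}).

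For part (2) I would fix a representative $A$ with $[A]\in U_I$ and reduce the claim to a block $LU$ decomposition of the top block of $A(I)$. Let $M:=\sum_{j=1}^r m_j$ and let $A(I)_{\mathrm{top}}$ be the $M\times M$ submatrix of $A(I)$ on the rows $J_r$; by the conditions for $i=1,\ldots,r$ its leading principal block minors (of sizes $\sum_{j\le i}m_j$) are all nonsingular, which is exactly the condition guaranteeing a unique factorization $A(I)_{\mathrm{top}}=L\,U$ with $L$ block unit lower triangular and $U$ block upper triangular with nonsingular diagonal blocks. Setting $Q_A:=U^{-1}$ gives a matrix of the form (\ref{lemmaQ}), and then $A(I)\,Q_A$ has top part $L U U^{-1}=L$, namely the staircase with identity diagonal blocks and zero blocks above, while its bottom $m_{r+1}$ rows are unconstrained; this is precisely (\ref{uniqueform}). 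Conversely, any block upper triangular $Q$ with $A(I)Q$ of the form (\ref{uniqueform}) yields a block $LU$ factorization $A(I)_{\mathrm{top}}=(\text{its top block})\cdot Q^{-1}$, so uniqueness of $Q_A$ follows from uniqueness of the block $LU$ decomposition.

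The main obstacle I anticipate is the inductive extension step in part (1): one must enlarge the coordinate set from $J_{i-1}$ to $J_i$ while respecting the nested flag structure, so that all earlier nonsingularity conditions are preserved and the new one is created simultaneously, and the correction $v_l\mapsto v_l'$ is the device that decouples the new block from the already-fixed ones. For part (2) the only delicate point is the existence and uniqueness of the block $LU$ decomposition under exactly the minor conditions defining $U_I$; this can itself be established by a short induction on $r$ using block Gaussian elimination, the successive block pivots being nonsingular precisely because of those minor conditions.
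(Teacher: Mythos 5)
Your proposal is correct and follows essentially the same route as the paper: part (1) is the same greedy block-by-block extension of the index set (the paper phrases it as selecting linearly independent rows of $(A_1,\ldots,A_i)$ rather than via the projections $\pi_{J_i}|_{L_i}$, but these are the same condition). Part (2) likewise rests on exactly the nonsingularity of the leading principal block minors; the paper simply solves the defining linear systems for the block columns of $Q_A$ one at a time, which is your block $LU$ factorization computed columnwise.
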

\begin{proof}~
\begin{enumerate}
\item
Suppose $A_i\in M(N,m_i)$ for $1\leq i\leq r$ and $\text{rank}(A_1,\ldots,A_r)=\sum_{i=1}^rm_i$. Then we can choose $m_1$ rows, say $1\leq k_1^{(1)}<\cdots<k_{m_1}^{(1)}\leq N$, such that these rows of $A_1$ are linearly independent as $\text{rank}(A_1)=m_1$. We denote by $I_1:=(k_1^{(1)},\ldots,k_{m_1}^{(1)})$. By our choice the $k_1^{(1)},\ldots,k_{m_1}^{(1)}$ rows of the matrix $(A_1,A_2)$ are also linearly independent. Since $\text{rank}(A_1,A_2)=m_1+m_2$, we are able to supplement these $m_1$ rows with another $m_2$ rows, say $k_1^{(2)}<\cdots<k_{m_2}^{(2)}$, such that these $m_1+m_2$ rows of $(A_1,A_2)$ are linearly independent. We denote by $I_2:=(k_1^{(2)},\ldots,k_{m_2}^{(2)})$. We continue to apply this idea to obtain $I_i:=(k_1^{(i)},\ldots,k_{m_i}^{(i)})$ ($i\leq r$) such that the rows whose indices are contained in $I_1,\ldots,I_i$ of the matrix $(A_1,\ldots,A_i)$ are linearly independent. Denote by $I_{r+1}:=\{1,\ldots,N\}-\bigcup_{1\leq i\leq r}I_i$ and $I:=(I_1,\ldots,I_{r+1})$. Then $[(A_1,\ldots,A_r)]\in U_I$ for this chosen $I$. This completes the proof of (\ref{unique1}).

\item
Note that the submatrices $Q_{1i},\ldots,Q_{ii}$ in $Q_A$ are characterized by the following equations
$$\begin{pmatrix}A_1^{(1)}&\cdots& A_i^{(1)}\\
\vdots&\ddots&\vdots\\
A_1^{(i)}&\cdots&A_i^{(i)}\end{pmatrix}
\begin{pmatrix}Q_{1i}\\Q_{2i}\\\vdots\\Q_{ii}\end{pmatrix}
=
\begin{pmatrix}0\\\vdots\\0\\E_{m_i}\end{pmatrix},\qquad 1\leq i\leq r,$$
whose existence and uniqueness are then yielded by the invertibility of the matrices $(A_{r}^{(s)})_{1\leq r,s\leq i}$.
\end{enumerate}
\end{proof}

Proposition \ref{prop} tells us that each coset
$[A=(A_1,\ldots,A_r)]\in U_I$ contains a unique matrix representative such that it is of the form of the right hand side of (\ref{uniqueform}) after rearranging its rows in terms of $I$. Now we can use the bottom left entries of this unique matrix representative to be the local coordinates. To be more precise, we have
\be\label{flag}
\begin{split}\varphi_I:~U_I&\stackrel{\cong}{\longrightarrow}
\mathbb{C}^{d},\qquad(d=\sum_{1\leq i<j\leq r+1}m_im_j)\\
[A]&\longmapsto \text{the bottom left entries in $(A\cdot Q_A)(I)$}.\end{split}\ee
This, together with (\ref{unique1}), implies that $\{(U_I,\varphi_I)~|~I\}$ gives the desired local coordinate charts for the complex flag manifold $F(m_1,\ldots,m_{r+1})$.

\section{Circle actions on complex flag manifolds and proof of main results}\label{constructionsection}
In this section we construct a holomorphic circle action on the complex flag manifolds $F(m_1,\ldots,m_r,m_{r+1})$, show that it has isolated fixed points, and explicitly determine the weights on these fixed points. Then Theorems \ref{BRF0} and \ref{BRF2} will yield our main results in Section \ref{mainresultsection}.

We arbitrarily choose $N=\sum_{j=1}^{r+1}m_j$ mutually distinct integers $k_1,\ldots,k_N$ and use them to construct the following circle action.
\be\label{circleaction}\begin{split}
S^1\times F(m_1,\ldots,m_{r+1})&\stackrel{\psi}{\longrightarrow}F(m_1,\ldots,m_{r+1}),\\
\big(g,[A=(A_1,\ldots,A_r)]\big)&\longmapsto
\Big[\left(\begin{array}{ccc}
g^{k_1}&  &\\
& \ddots &\\
& &g^{k_N}
\end{array}
\right)A\Big],
\end{split}\ee
i.e., the action of $g\in S^1$ is given by multiplying the entries of the $i$-row of the matrix $A$ with $g^{k_i}$. It is clear that $\psi$ is well-defined and gives rise to a circle action on $F(m_1,\ldots,m_{r+1})$. Define $$\psi_g([A]):=\psi(g,[A]).$$
By the definition of $U_I$ in (\ref{UI}) we know that $\psi_g(U_I)=U_I$ for any decomposition $I$. Recall in (\ref{flag}) the local coordinate description of $U_I$ under $\varphi_I$ and let
$$\tilde{\psi}_g: \mathbb{C}^d\stackrel{\cong}{\longrightarrow}
\mathbb{C}^d\qquad(d=\sum_{1\leq
i<j\leq r+1}m_im_j)$$
be the map on the level of Euclidean space induced from $\psi_g$ and $\varphi_I$, i.e., we have the following commutative diagram:

\be\label{psi}\xymatrix{
U_I \ar[d]^{\cong}_{\varphi_I} \ar[r]_{\cong}^{\psi_g} &U_I\ar[d]^{\cong}_{\varphi_I}\\
\mathbb{C}^d \ar[r]_{\cong}^{\tilde{\psi}_g} &\mathbb{C}^d}.\ee

For each decomposition $I=(I_1,\ldots,I_{r+1})$, if $I_i=(t_1,\ldots,t_{m_i})$ $(t_1<\cdots<t_{m_i})$, we simply denote by $(g^{\pm k_{I_i}})$ the rank $I_i$ diagonal matrix whose entries are $g^{\pm k_{t_1}},\ldots,g^{\pm k_{t_{m_i}}}$:
$$(g^{k_{I_i}}):=\left(\begin{array}{ccc}
g^{k_{t_1}}&  &\\
& \ddots &\\
& &g^{k_{t_{m_i}}}
\end{array}
\right),\qquad (g^{-k_{I_i}}):=\left(\begin{array}{ccc}
g^{-k_{t_1}}&  &\\
& \ddots &\\
& &g^{-k_{t_{m_i}}}
\end{array}
\right).$$

With this convention understood, we now prove the following key lemma, which describes the behavior of the map $\tilde{\psi}_g$ and contains all key information in establishing our main results.
\begin{lemma}\label{keylemma}
The map $\tilde{\psi}_g$ in {\rm(}\ref{psi}{\rm)} behaves as follows:
\be\label{mappsi}\begin{split}
&\tilde{\psi}_g\Big(\begin{pmatrix}E_{m_1}&0&\cdots&0\\
B_{21}&E_{m_2}&\cdots&0\\
B_{31}&B_{32}&\cdots&0\\
\vdots&\vdots&\ddots&\vdots\\
B_{r1}&B_{r2}&\cdots&E_{m_r}\\
B_{r+1,1}&B_{r+1,2}&\cdots&B_{r+1,r}
\end{pmatrix}\Big)\\
=&
\begin{pmatrix}E_{m_1}&0&\cdots&0\\
(g^{k_{I_2}})B_{21}(g^{-k_{I_1}})&E_{m_2}&\cdots&0\\
(g^{k_{I_3}})B_{31}(g^{-k_{I_1}})&(g^{k_{I_3}})B_{32}
(g^{-k_{I_2}})&\cdots&0\\
\vdots&\vdots&\ddots&\vdots\\
(g^{k_{I_r}})B_{r1}(g^{-k_{I_1}})&
(g^{k_{I_r}})B_{r2}(g^{-k_{I_2}})&\cdots&E_{m_r}\\
(g^{k_{I_{r+1}}})B_{r+1,1}(g^{-k_{I_1}})&
(g^{k_{I_{r+1}}})B_{r+1,2}(g^{-k_{I_2}})&\cdots&
(g^{k_{I_{r+1}}})B_{r+1,r}(g^{-k_{I_r}})
\end{pmatrix}.
\end{split}\ee
Here, as we have done in {\rm(}\ref{flag}{\rm)}, still denote by the bottom left entries of the matrix the coordinate components of $\mathbb{C}^d$.
{\rm(}\ref{mappsi}{\r)} particularly implies that the action $\psi_g$ is holomorphic for any $g\in S^1$. This means that the circle action $\psi$ constructed in {\rm(}\ref{circleaction}{\rm)} is holomorphic.
\end{lemma}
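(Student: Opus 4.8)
The plan is to run the unique normalized representative of a point of $U_I$ through the action (\ref{circleaction}) and then re-normalize it by means of Lemma \ref{equivalentlemma}, reading off the transformed coordinates from the uniqueness part of Proposition \ref{prop}.

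First I would fix $[A]\in U_I$ and take its canonical representative whose row-rearrangement $A(I)$ is the block matrix on the left-hand side of (\ref{mappsi}), as furnished by (\ref{uniqueform}). Applying $\psi_g$ means left-multiplying $A$ by $\text{diag}(g^{k_1},\ldots,g^{k_N})$; after rearranging rows according to $I$ this becomes left-multiplication of the $i$-th block-row by the diagonal matrix $(g^{k_{I_i}})$. The key structural observation is that left multiplication by this block-diagonal matrix sends each block $M_{ij}$ to $(g^{k_{I_i}})M_{ij}$, and therefore preserves the vanishing of all blocks strictly above the block-diagonal while turning the diagonal identities $E_{m_i}$ into the invertible diagonal matrices $(g^{k_{I_i}})$.

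The resulting matrix is thus no longer normalized, its diagonal blocks being $(g^{k_{I_i}})$ rather than $E_{m_i}$. To restore the form (\ref{uniqueform}) I would right-multiply by $Q:=\text{diag}\big((g^{-k_{I_1}}),\ldots,(g^{-k_{I_r}})\big)$, which is a block-diagonal, hence block upper triangular, matrix of the shape (\ref{lemmaQ}); by Lemma \ref{equivalentlemma} this leaves the flag unchanged. This multiplication converts each diagonal block back to $E_{m_i}$, leaves the zero blocks zero, and sends the block $(g^{k_{I_i}})B_{ij}$ below the diagonal to $(g^{k_{I_i}})B_{ij}(g^{-k_{I_j}})$. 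Since the outcome is again in the normalized form (\ref{uniqueform}), the uniqueness in Proposition \ref{prop} identifies its bottom-left blocks as the new coordinates of $\tilde{\psi}_g([A])$, which is precisely (\ref{mappsi}).

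Finally, holomorphicity is immediate from (\ref{mappsi}): for fixed $g$ each entry of $B_{ij}$ is merely rescaled by a constant of the form $g^{k_\alpha-k_\beta}$, so $\tilde{\psi}_g$ is $\mathbb{C}$-linear, hence holomorphic, on each $\mathbb{C}^d$; as the charts $U_I$ cover $F$ by Proposition \ref{prop} and $\psi_g(U_I)=U_I$, the action $\psi$ is holomorphic. I expect no genuine obstacle here, only the mild point of verifying that the re-normalizing matrix $Q$ may be taken block-diagonal rather than genuinely block upper triangular; this is exactly what the preservation of the above-diagonal zero blocks under the diagonal torus action delivers, and it is what keeps the whole computation transparent.
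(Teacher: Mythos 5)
Your proposal is correct and follows essentially the same route as the paper: both compute the effect of left-multiplication by the block-diagonal matrix $\mathrm{diag}\big((g^{k_{I_1}}),\ldots,(g^{k_{I_{r+1}}})\big)$ on the normalized representative and then re-normalize by right-multiplying with $\mathrm{diag}\big((g^{-k_{I_1}}),\ldots,(g^{-k_{I_r}})\big)$, invoking the uniqueness statement of Proposition \ref{prop} to identify the result as the new coordinates. The only cosmetic difference is that you start from the already-normalized representative, whereas the paper carries a general representative $A$ together with its normalizer $Q_A$ through the computation.
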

\begin{proof}
Recall the definitions of $\varphi_I$ \big((\ref{uniqueform}) and (\ref{flag})\big) and $\psi_g$. It suffices to show that, for $[A]\in U_I$, if $A(I)\cdot Q_A$ is equal to the matrix on the left hand side of (\ref{mappsi}), then $$\big(\psi_g(A)\big)(I)\cdot Q_{\psi_g(A)}$$ is equal to the matrix on the right hand side of (\ref{mappsi}).

First note that
\be\big(\psi_g(A)\big)(I)=\Big(\left(\begin{array}{ccc}
g^{k_1}&  &\\
& \ddots &\\
& &g^{k_N}
\end{array}
\right)A\Big)(I)=
\left(\begin{array}{ccc}
(g^{k_{I_1}})&  &\\
& \ddots &\\
& &(g^{k_{I_{r+1}}})
\end{array}
\right)\cdot A(I).
\nonumber\ee

Therefore,
\be\begin{split}
&\big(\psi_g(A)\big)(I)\cdot Q_A\\
=&\left(\begin{array}{ccc}
(g^{k_{I_1}})&  &\\
& \ddots &\\
& &(g^{k_{I_{r+1}}})
\end{array}
\right)\cdot A(I)\cdot Q_A\\
=&\left(\begin{array}{ccc}
(g^{k_{I_1}})&  &\\
& \ddots &\\
& &(g^{k_{I_{r+1}}})
\end{array}
\right)\begin{pmatrix}E_{m_1}&0&\cdots&0\\
B_{21}&E_{m_2}&\cdots&0\\
B_{31}&B_{32}&\cdots&0\\
\vdots&\vdots&\ddots&\vdots\\
B_{r1}&B_{r2}&\cdots&E_{m_r}\\
B_{r+1,1}&B_{r+1,2}&\cdots&B_{r+1,r}
\end{pmatrix}\\
=&\begin{pmatrix}E_{m_1}&0&\cdots&0\\
(g^{k_{I_2}})B_{21}(g^{-k_{I_1}})&E_{m_2}&\cdots&0\\
(g^{k_{I_3}})B_{31}(g^{-k_{I_1}})&(g^{k_{I_3}})B_{32}
(g^{-k_{I_2}})&\cdots&0\\
\vdots&\vdots&\ddots&\vdots\\
(g^{k_{I_r}})B_{r1}(g^{-k_{I_1}})&
(g^{k_{I_r}})B_{r2}(g^{-k_{I_2}})&\cdots&E_{m_r}\\
(g^{k_{I_{r+1}}})B_{r+1,1}(g^{-k_{I_1}})&
(g^{k_{I_{r+1}}})B_{r+1,2}(g^{-k_{I_2}})&\cdots&
(g^{k_{I_{r+1}}})B_{r+1,r}(g^{-k_{I_r}})
\end{pmatrix}
\left(\begin{array}{ccc}
(g^{k_{I_1}})&  &\\
& \ddots &\\
& &(g^{k_{I_{r}}})
\end{array}
\right).
\end{split}
\nonumber\ee

Thus we have established
\be\label{formula}
\big(\psi_g(A)\big)(I)\cdot \Big[Q_A\left(\begin{array}{ccc}
(g^{-k_{I_1}})&  &\\
& \ddots &\\
& &(g^{-k_{I_{r}}})
\end{array}
\right)\Big]=\text{the matrix on the right hand side of (\ref{mappsi})}.
\ee

Note that $Q_A$ is block upper triangular and so is the product matrix inside $[\cdot]$ on the left hand side of (\ref{formula}). The uniqueness of $Q_{\psi_g(A)}$ showed in \big(Prop. \ref{prop}, (2)\big) tells us that this product matrix is precisely $Q_{\psi_g(A)}$ and thus yields the desired proof.
\end{proof}

With this key lemma in hand, we are now ready to show the following results and complete the proof of our main results in Section \ref{mainresultsection}.
\begin{proposition}~
\begin{enumerate}
\item
The fixed points of the holomorphic circle action are indexed by the decompositions $I$ of $\{1,\ldots,N\}$, say $\{P_I\}$. More precisely, $P_I=\varphi_I^{-1}(0)$, where $0$ denotes the origin of $\mathbb{C}^d$.

\item
The weights around $P_I$ induced by the circle action on the holomorphic tangent space to $P_I$ are
\be\label{weight}\coprod_{1\leq i<j\leq r+1}\big\{-k_{\alpha}+k_{\beta}~|~\alpha\in I_i,~\beta\in I_j\big\},\qquad I=(I_1,\ldots,I_{r+1}).\ee
Here ``$\coprod$'' means disjoint union, i.e., possibly repeated integers cannot be discarded.

\item
Theorem \ref{mainresult} and Proposition \ref{mainprop} hold.
\end{enumerate}
\end{proposition}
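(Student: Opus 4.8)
The plan is to establish the three numbered claims in sequence, since each builds on the previous one, and then observe that the main results follow by directly feeding the computed weights into the two Bott-type theorems already stated. The geometric engine is Lemma \ref{keylemma}, which has just been proven, so the work consists entirely of reading off its consequences.

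For part (1), I would locate the fixed points of $\psi$. A point $[A]\in U_I$ corresponds under $\varphi_I$ to the bottom-left entries $B_{ij}$ of the normal form in (\ref{uniqueform}), and Lemma \ref{keylemma} shows that $\tilde\psi_g$ sends the block $B_{ji}$ to $(g^{k_{I_j}})B_{ji}(g^{-k_{I_i}})$. A point is fixed by the whole circle exactly when every such block is unchanged for all $g\in S^1$. Since the entries of $(g^{k_{I_j}})B_{ji}(g^{-k_{I_i}})$ are scaled by $g^{-k_\alpha+k_\beta}$ with $\alpha\in I_i$, $\beta\in I_j$, and the $k_\ell$ are mutually distinct integers, every exponent $-k_\alpha+k_\beta$ is nonzero; hence fixedness forces every entry $B_{ji}=0$, i.e. the coordinate is the origin. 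So $P_I:=\varphi_I^{-1}(0)$ is the unique fixed point in $U_I$, and by (\ref{unique1}) these exhaust all fixed points. The distinctness of the $k_\ell$ is exactly what guarantees the fixed points are isolated.

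For part (2), I would compute the isotropy representation on $T_{P_I}F$. The chart $\varphi_I$ identifies $T_{P_I}F$ with $\mathbb{C}^d$, where the coordinates are the entries of the blocks $B_{ji}$ for $1\le i<j\le r+1$. The linearization of $\tilde\psi_g$ at the origin is just the diagonal scaling read off from (\ref{mappsi}): the entry of $B_{ji}$ indexed by row $\beta\in I_j$ and column $\alpha\in I_i$ transforms by $g^{k_\beta}\cdot g^{-k_\alpha}$, so its weight is $-k_\alpha+k_\beta$. Collecting over all blocks $1\le i<j\le r+1$ and all $(\alpha,\beta)\in I_i\times I_j$ yields precisely the multiset (\ref{weight}); the disjoint union $\coprod$ records that repeated values must be kept with multiplicity, since each corresponds to a distinct coordinate direction. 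I would emphasize that this matches $\sharp(W_I)=d=\dim_{\mathbb C}F$, confirming we have accounted for every tangent direction.

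For part (3), I would simply substitute. The weight multiset (\ref{weight}) at $P_I$ is exactly the set $W_I$ of Section \ref{mainresultsection} with the formal variables $x_\ell$ specialized to the integers $k_\ell$. Applying Bott's residue formula (Theorem \ref{BRF0}) to this holomorphic circle action, the left-hand side $\sum_P f(k_1,\ldots,k_n)/\prod k_i$ becomes $\sum_I f(W_I)/e(W_I)=R_f(k_1,\ldots,k_N)$, which vanishes for $\deg(f)<d$ and equals the Chern number $\int_F\tilde f(c_1,\ldots,c_d)$ for $\deg(f)=d$; since the $k_\ell$ were arbitrary mutually distinct integers, the rational function $R_f$ is identically zero (resp. constant), giving Theorem \ref{mainresult}, and the $c_\lambda$ case identifies the constant with the named Chern number. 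Likewise Theorem \ref{BRF2} applied to this action yields both identities of Proposition \ref{mainprop}, using the known existence of a K\"ahler-Einstein metric on $F$ for the $c_1^{d+1}$ statement. I expect the only genuine subtlety to be the bookkeeping in part (2): one must verify that the linear part of $\tilde\psi_g$ is already diagonal in the chosen coordinates (so that no further diagonalization is needed to extract weights) and that the indexing $(\alpha,\beta)\mapsto -k_\alpha+k_\beta$ is matched correctly to row/column positions of each block, but this is immediate from the explicit form (\ref{mappsi}) rather than a real obstacle.
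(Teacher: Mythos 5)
Your proposal is correct and follows essentially the same route as the paper: both read the fixed points and the diagonal weights $-k_\alpha+k_\beta$ directly off the transformation law of Lemma \ref{keylemma}, and both conclude by feeding these weights into Theorems \ref{BRF0} and \ref{BRF2} and invoking the arbitrariness of the mutually distinct integers $k_1,\ldots,k_N$ to upgrade the equalities to identities in the variables $x_1,\ldots,x_N$. No substantive difference from the paper's argument.
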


\begin{proof}
\begin{enumerate}
\item
Suppose $P$ is some fixed point of the circle action $\psi$. This means $P$ is fixed by $\psi_g$ for each $g$. Assume by (\ref{unique1}) that $P\in U_I$ for some $I$. Then $\varphi_I(P)$ is fixed by $\tilde{\psi}_g$ for each $g$. This implies that, the coordinate functions in (\ref{mappsi}) for $\varphi_I(P)$ satisfy
\be\label{anyg}B_{ji}=(g^{k_{I_j}})B_{ji}(g^{k_{-I_i}}) ~(1\leq i<j\leq r+1),\qquad\forall~g\in S^1.\ee
Clearly the unique solution to (\ref{anyg}) is all these matrices $B_{ji}=(0)$ and thus the unique fixed point in $U_I$ is $\varphi_I^{-1}(0)=:P_I$.

\item
Since the tangent space to the origin of $\mathbb{C}^d$ can be canonically identified with $\mathbb{C}^d$ itself. Thus the tangent map of $\psi_g$ at $P_I$ can be identified with $\tilde{\psi}_g$. However, (\ref{mappsi}) tells us that $\tilde{\psi}_g$ sends $B_{ji}$ to $(g^{k_{I_j}})B_{ji}(g^{k_{-I_i}})$, i.e., this map is given by multiplying the entries of $B_{ji}$, which are viewed as the coordinate components of $\mathbb{C}^d$, with $g^{-k_{\alpha}+k_{\beta}}$ ($\alpha\in I_i,\beta\in I_j$), which, according to the definition of weight \big(cf. (\ref{weightdefinitin})\big), leads to (\ref{weight}).

\item
Knowing the concrete values of the weights (\ref{weight}) around the isolated fixed points of the holomorphic circle action $\psi$, we now directly apply Theorems \ref{BRF0} and \ref{BRF2} to this model to yield the conclusions in Theorem \ref{mainresult} and Proposition \ref{mainprop}, with the only difference that the indeterminates $x_i$ be replaced by the integers $k_i$. Note that the choice of these mutually distinct integers $k_i$ is completely arbitrary. This means that these equalities hold as identities, i.e., we have the desired conclusions as stated in Theorem \ref{mainresult} and Proposition \ref{mainprop}.
\end{enumerate}
\end{proof}

\section{Appendix}\label{appendix}
In this appendix we apply Theorem \ref{mainresult} to work out (\ref{chernnumber}) again. Since the calculations for $c_1^5[F(1,1,2)]$ and $c_1^5[F(1,2,1)]$ are identically the same. We only demonstrate the former in detail.

For $F(1,1,2)$, we have $r=2,$ $m_1=m_2=1$, $m_3=2$ and $N=4$. There are $12$ decompositions of the set $\{1,2,3,4\}:$
$$I_{(1,2)}:=(\{1\},\{2\},\{3,4\}),\qquad I_{(1,3)}:=(\{1\},\{3\},\{2,4\}),\qquad
I_{1,4}:=(\{1\},\{4\},\{2,3\}),$$
$$I_{(2,1)}:=(\{2\},\{1\},\{3,4\}),\qquad I_{(2,3)}:=(\{2\},\{3\},\{1,4\}),\qquad
I_{2,4}:=(\{2\},\{4\},\{1,3\}),$$
$$I_{(3,1)}:=(\{3\},\{1\},\{2,4\}),\qquad I_{(3,2)}:=(\{3\},\{2\},\{1,4\}),\qquad
I_{3,4}:=(\{3\},\{4\},\{1,2\}),$$
$$I_{(4,1)}:=(\{4\},\{1\},\{2,3\}),\qquad I_{(4,2)}:=(\{4\},\{2\},\{1,3\}),\qquad
I_{4,3}:=(\{4\},\{3\},\{1,2\}).$$
As we have remarked in Remark \ref{remarkaftertheorem}, we may assume that $x_i=i$ $(1\leq i\leq4)$ to calculate the Chern number.
For simplicity, we denote by
$$W_{(i,j)}:=W_{I_{(i,j)}},\qquad e_{(i,j)}:=e(W_{(i,j)}),\qquad c_{1(i,j)}:=c_1(W_{(i,j)}).$$
 Then we have
 \begin{eqnarray}
\left\{ \begin{array}{ll}
\big(W_{(1,2)},e_{(1,2)},c_{1(1,2)}\big)
=\big(\{1,2,3,1,2\},12,9\big),\\
\big(W_{(1,3)},e_{(1,3)},c_{1(1,3)}\big)
=\big(\{2,1,3,-1,1\},-6,6\big),\\
\big(W_{(1,4)},e_{(1,4)},c_{1(1,4)}\big)
=\big(\{3,1,2,-2,-1\},12,3\big),\\
\big(W_{(2,1)},e_{(2,1)},c_{1(2,1)}\big)
=\big(\{-1,1,2,2,3\},-12,7\big),\\
\big(W_{(2,3)},e_{(2,3)},c_{1(2,3)}\big)
=\big(\{1,-1,2,-2,1\},4,1\big),\\
\big(W_{(2,4)},e_{(2,4)},c_{1(2,4)}\big)
=\big(\{2,-1,1,-3,-1\},-6,-2\big),\\
\big(W_{(3,1)},e_{(3,1)},c_{1(3,1)}\big)
=\big(\{-2,-1,1,1,3\},6,2\big),\\
\big(W_{(3,2)},e_{(3,2)},c_{1(3,2)}\big)
=\big(\{-1,-2,1,-1,2\},-4,-1\big),\\
\big(W_{(3,4)},e_{(3,4)},c_{1(3,4)}\big)
=\big(\{1,-2,-1,-3,-2\},12,-7\big),\\
\big(W_{(4,1)},e_{(4,1)},c_{1(4,1)}\big)
=\big(\{-3,-2,-1,1,2\},12,3\big),\\
\big(W_{(4,2)},e_{(4,2)},c_{1(4,2)}\big)
=\big(\{-2,-3,-1,-1,1\},6,-6\big),\\
\big(W_{(4,3)},e_{(4,3)},c_{1(4,3)}\big)
=\big(\{-1,-3,-2,-2,-1\},-12,9\big).
\end{array} \right.\nonumber
\end{eqnarray}
Therefore, Theorem \ref{mainresult} tells us that
$$c_1^5[F(1,1,2)]=\sum_{1\leq i\neq j\leq 4}\frac{c_{1(i,j)}^5}{e_{(i,j)}}=\frac{12^5}{9}+\cdots+\frac{(-12)^5}{-9}=4500.$$
We also note that in this case
$$\sum_{1\leq i\neq j\leq 4}\frac{c_{1(i,j)}^6}{e_{(i,j)}}=\frac{12^6}{9}+\cdots+\frac{(-12)^6}{-9}=0,$$
which is consistent with the first equality in (\ref{d}).

\end{document}